\documentclass[12pt]{amsart}

\usepackage{amsmath,amssymb,mathtools}
\usepackage{microtype}

\newtheorem{thm}{Theorem}[section]
\newtheorem{prop}[thm]{Proposition}
\newtheorem{cor}[thm]{Corollary}
\newtheorem{lem}[thm]{Lemma}
\theoremstyle{definition}
\newtheorem{defn}[thm]{Definition}
\theoremstyle{remark}
\newtheorem{remark}[thm]{Remark}
\newtheorem{example}[thm]{Example}

\numberwithin{equation}{section}

\newcommand{\Hh}{\mathbb H}
\newcommand{\C}{\mathbb C}
\newcommand{\Z}{\mathbb Z}

\begin{document}

\title[Replication descent and $J$-finality]{Replication Descent and $J$-Finality of Replicable Functions}
\author{Eric Culf}
\author{Abdellah Sebbar}

\address{Institute for Quantum Computing, University of Waterloo}
\email{eculf@uwaterloo.ca}
\address{Department of Mathematics and Statistics, University of Ottawa}
\email{asebbar@uottawa.ca}

\subjclass[2020]{11F03, 11F22, 11F30}
\keywords{replicable function, completely replicable function, Hauptmodul, Faber polynomial, modular equation, moonshine, $J$-function}

\begin{abstract}
We show that replication carries congruence symmetry down an explicit level
tower.  If a normalized replicable function $f$, holomorphic on the upper
half-plane, is invariant under $\Gamma_0(N)$, then its $n$th replicate is
invariant under $\Gamma_0(N/(N,n))$.  Thus every replicate whose index is
divisible by $N$ is the normalized modular invariant $J=j-744$.  This gives a
direct and classification-free proof of $J$-finality for congruence-invariant
replicable functions.  The argument uses only the replication identities and
an elementary generation theorem for congruence subgroups; it requires neither
complete replicability nor arithmetic hypotheses on the Fourier coefficients.
We then apply the descent law to completely replicable functions of finite
replication order.  Their replication towers have a canonical terminal
replicate, and the only possible terminal functions are
$J$, $q^{-1}$, and $q^{-1}+q$.  Moreover, a finite-order completely replicable
function is $J$-final precisely when its terminal replicate is $J$, and any
symmetry beyond translations forces this alternative.
\end{abstract}

\maketitle

\section{Introduction}

Replicability arose in Norton's study of the McKay--Thompson series of
monstrous moonshine.  It packages a family of identities in which Faber
polynomials are expressed through generalized Hecke sums of auxiliary
functions, the \emph{replicates}.  The model example is the normalized modular
invariant
\[
J(\tau)=j(\tau)-744=q^{-1}+196884q+21493760q^2+\cdots,
\qquad q=e^{2\pi i\tau},
\]
whose replicates are all equal to $J$.  Conway and Norton found the same
structure throughout the moonshine family: replication by $a$ reflects the
power map $g\mapsto g^a$ on the underlying Monster conjugacy classes
\cite{conway-norton,norton}.

A central theme in the subject is the relation between the formal replication
identities and modular symmetry.  Cummins and Norton proved replicability for
rational Hauptmoduln attached to a broad class of genus-zero groups
\cite{cummins-norton}.  In the opposite direction, Martin, Kozlov, and Cummins
and Gannon obtained modularity and genus-zero results from complete
replicability or modular equations under additional hypotheses
\cite{martin,kozlov,cummins-gannon}.  Carnahan later placed related phenomena
in the geometry of Hecke-monic functions on moduli spaces of elliptic curves
with torsors \cite{carnahan}.

The question addressed here is different and more structural.  Suppose that a
replicable function is already known to have congruence symmetry.  How does
that symmetry change as one moves through its replicate tower?  Our main
result gives a uniform answer for every replication index.

\begin{thm}\label{thm:intro-descent}
Let
\[
f(\tau)=q^{-1}+\sum_{r\geq1}a_rq^r
\]
be a replicable function holomorphic on $\Hh$, and suppose that $f$ is invariant under $\Gamma_0(N)$. Then for every $n\geq1$ the replicate $f^{(n)}$ is invariant under
\[
\Gamma_0\!\left(\frac{N}{(N,n)}\right).
\]
\end{thm}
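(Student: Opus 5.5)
The plan is a strong induction on $n$, driven by the replication identity. Write $X_n$ for the $n$th Faber polynomial of $f$, so that $X_n(f)=q^{-n}+O(q)$. Replicability says that
\[
X_n(f)(\tau)=\sum_{ad=n}\ \sum_{b=0}^{d-1} f^{(a)}\!\left(\frac{a\tau+b}{d}\right).
\]
The term with $a=n$, $d=1$ is $f^{(n)}(n\tau)$. Solving for it gives
\[
f^{(n)}(n\tau)=X_n(f)(\tau)-\sum_{\substack{ad=n\\ a<n}}\ \sum_{b=0}^{d-1} f^{(a)}\!\left(\frac{a\tau+b}{d}\right).
\]
Since $X_n(f)$ is a polynomial in $f$, it is $\Gamma_0(N)$-invariant. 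For each proper divisor $a$ of $n$, the induction hypothesis gives invariance of $f^{(a)}$ under $\Gamma_0(N/(N,a))$; the base case $a=1$ is the hypothesis on $f$.

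The first goal is to show that the right-hand side is invariant under
\[
\Gamma_0(M)\cap\Gamma^0(n),\qquad M=\operatorname{lcm}(N,n)=\frac{nN}{(N,n)},
\]
or under a group of comparable size. For each proper divisor $a$, with $d=n/a$, I treat the inner sum over $b$ as a Hecke-type operator $V_{a,d}$ applied to a function of level $L_a=N/(N,a)$. The standard coset computation is this: for $\gamma$ in the relevant group, the matrices $\begin{pmatrix}a&b\\0&d\end{pmatrix}\gamma$ are, up to left multiplication by elements of $\Gamma_0(L_a)$, a permutation of the same upper-triangular representatives. Carrying this out explicitly, and tracking exactly which congruence conditions on the entries of $\gamma$ are needed so that the left factors land in $\Gamma_0(L_a)$, is the step I expect to be the main obstacle. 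If full $\Gamma_0(M)$ is too much to ask termwise, I would settle for invariance under $\Gamma_0(M)\cap\Gamma^0(n)$. The condition $n\mid \gamma_{12}$ makes the permutation of the $b$'s trivial to control, and the condition $M\mid\gamma_{21}$ supplies the extra divisibility by $d$ on top of $L_a$, since $L_a d$ divides $M$ up to the factor that the $\Gamma^0$-condition absorbs.

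Given this, conjugate by $\operatorname{diag}(n,1)$. The map
\[
\begin{pmatrix}\alpha&\beta\\ \gamma&\delta\end{pmatrix}\longmapsto\begin{pmatrix}\alpha&n\beta\\ \gamma/n&\delta\end{pmatrix}
\]
shows that $f^{(n)}$ is invariant under $\Gamma_0(M/n)\cap\Gamma^0(\cdot)$ for a suitable top-right congruence condition, with $M/n=N/(N,n)$. To finish, $f^{(n)}$ has an expansion in integral powers of $q$, so it is invariant under $T=\begin{pmatrix}1&1\\0&1\end{pmatrix}$. I then invoke the elementary generation theorem referred to in the abstract: a group of the form $\Gamma_0(K)\cap\Gamma^0(m)$ together with $T$ generates all of $\Gamma_0(K)$. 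This upgrades the invariance to the full group $\Gamma_0(N/(N,n))$ and closes the induction.

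Holomorphy on $\Hh$ is used only to ensure that all replicates, and hence all the Hecke sums, are genuine holomorphic functions. This lets the identities be manipulated as functional equations rather than formal $q$-series.
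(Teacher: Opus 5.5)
Your architecture matches the paper's proof step for step. You induct on $n$, isolate $f^{(n)}(n\tau)$, and get invariance of each branch sum at level $M=\operatorname{lcm}(N,n)$ by writing $\alpha_{a,b,d}\gamma=\delta_b\,\alpha_{a,b',d}$, with $\alpha_{a,b,d}=\left(\begin{smallmatrix}a&b\\0&d\end{smallmatrix}\right)$ and $\delta_b\in\Gamma_0(L_a)$. You then conjugate by $\operatorname{diag}(n,1)$ and upgrade with $T$ through the generation lemma.

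The gap is that the step you defer as ``the main obstacle'' is the core of the proof, and the heuristic you give for it is wrong. The divisibility $L_a d\mid M$ you reach for is false in general: take $N=n=2$, $a=1$, $d=2$, where $L_a d=4\nmid 2=M$. No $\Gamma^0(n)$ condition could repair it either. The lower-left entry of $\delta_b$ is $dC/a$, which does not involve the upper-right entry $B$ of $\gamma$. So as written you have neither established the branch-sum invariance nor identified the group for which it holds.

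Here is what closes it, for the full group $\Gamma_0(M)$. Let $\gamma=\left(\begin{smallmatrix}A&B\\C&D\end{smallmatrix}\right)\in\Gamma_0(M)$.
\begin{itemize}
\item Since $d\mid n\mid C$, you have $AD\equiv1\pmod d$. So $Ab'\equiv aB+bD\pmod d$ defines an affine bijection $b\mapsto b'$ of $\Z/d\Z$ with unit linear coefficient $A^{-1}D$.
\item The matrix $\delta_b=\alpha_{a,b,d}\gamma\alpha_{a,b',d}^{-1}$ has entries $A+bC/a$, $(aB+bD-Ab')/d-bb'C/(ad)$, $dC/a$, $D-b'C/a$. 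All are integers, because $ad=n\mid C$ and by the choice of $b'$.
\item Its lower-left entry is a multiple of $C/a$. You get $L_a\mid C/a$ because $aL_a=\operatorname{lcm}(a,N)$ divides $\operatorname{lcm}(n,N)=M$, and $M\mid C$. The extra factor $d$ is not needed.
\end{itemize}
Your fallback would also have worked. Conjugating $\Gamma_0(M)\cap\Gamma^0(n)$ gives $\Gamma_0(M/n)\cap\Gamma^0(n^2)$, and the generation lemma allows any top-right modulus.

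Two smaller points.
\begin{itemize}
\item You only cite the generation lemma, and it is not purely formal. The paper proves $\Gamma_0(m)=\langle\Gamma_0(m)\cap\Gamma^0(k),T\rangle$ in two moves. First it right-multiplies $\gamma\in\Gamma_0(m)$ by $\left(\begin{smallmatrix}1&0\\mt&1\end{smallmatrix}\right)$, choosing $t$ by the Chinese remainder theorem so that the upper-left entry becomes prime to $k$. This works because a prime dividing the upper-left entry can divide neither $m$ nor the upper-right entry, by the determinant. It then right-multiplies by a suitable power of $T$ to make the upper-right entry divisible by $k$.
\item You need every $f^{(a)}$ to be holomorphic on all of $\Hh$, since $\gamma$ moves points outside the region where the $q$-series converge. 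This is not automatic. It follows from the same isolation identity by induction, which the paper proves separately.
\end{itemize}
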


Prime by prime, replication by $p^r$ removes
$\min\{r,v_p(N)\}$ powers of $p$ from the guaranteed level.  Indices coprime
to $N$ preserve the level, whereas every index divisible by $N$ reaches level
one.  At that point holomorphy and the normalized principal part force the
replicate to be $J$.

\begin{cor}\label{cor:intro-J}
Under the hypotheses of Theorem~\ref{thm:intro-descent},
\[
f^{(n)}=J
\qquad\text{whenever }N\mid n.
\]
In particular, $f$ is $J$-final.
\end{cor}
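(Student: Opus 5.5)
Given Theorem~\ref{thm:intro-descent}, the plan is to show that a replicate of level one with the normalized principal part must equal $J$, and then to check that $J$-finality in the sense used here follows at once. Fix $n$ with $N\mid n$. Then $(N,n)=N$, and Theorem~\ref{thm:intro-descent} makes $f^{(n)}$ invariant under $\Gamma_0(1)=SL_2(\Z)$. We must also know that $f^{(n)}$ is holomorphic on $\Hh$ and that its $q$-expansion at $i\infty$ has the form $q^{-1}+O(q)$. Both facts come from the replication identities as they are used in the proof of the theorem. The replicates appear in the Faber-polynomial identity
\[
P_n(f)(\tau)=\sum_{ad=n,\;0\le b<d} f^{(a)}\!\left(\frac{a\tau+b}{d}\right),
\]
and they are determined recursively from $f$ by Möbius inversion over the divisors of $n$. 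Each term in the recursion is holomorphic on $\Hh$, and each has only the single pole $q^{-1}$ at the cusp with zero constant term. So $f^{(n)}$ is a holomorphic function on $\Hh$ with $f^{(n)}=q^{-1}+\sum_{r\ge1}a^{(n)}_rq^r$. I would take this normalization of replicates as part of the definition and recall it in the proof, rather than reprove it.

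Next, set $g=f^{(n)}-J$. This is an $SL_2(\Z)$-invariant function, holomorphic on $\Hh$. Its $q$-expansion has no negative powers and no constant term, so $g$ is holomorphic at the unique cusp of $SL_2(\Z)$ and vanishes there. Hence $g$ is a holomorphic modular function of weight zero on the compact Riemann surface $X(1)$, which makes it constant. Since $g(i\infty)=0$, we get $g\equiv0$, that is, $f^{(n)}=J$.

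Equivalently, one can say that $J$ is a Hauptmodul for $SL_2(\Z)$: $f^{(n)}$ is a polynomial in $J$ of degree equal to its pole order, and the normalization forces that polynomial to be $J$ itself. Finally, $J$-finality means that some replicate, and hence every replicate along a divisibility chain through it, equals $J$. Taking $n=N$ gives $f^{(N)}=J$, and for every multiple $n$ of $N$ the same conclusion holds by the argument above.

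The only step that needs real care is the principal-part claim. Replication must not introduce extra poles at $i\infty$, and the constant term must stay zero. If the paper's definition of replicability normalizes replicates only up to an additive constant, I would add a sentence showing that the constant term of $f^{(n)}$ vanishes. This comes from comparing constant terms in the identity above, using the fact that the Faber polynomials $P_n$ have zero constant term.
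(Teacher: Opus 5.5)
Your proposal is correct and is essentially the paper's proof. The descent theorem gives $\mathrm{SL}_2(\Z)$-invariance when $N\mid n$, and holomorphy of $f^{(n)}$ on $\Hh$ follows by recursively isolating the $a=n$, $d=1$ term of the replication identity. The normalization $q^{-1}+O(q)$ is part of the definition. Then $f^{(n)}-J$ is a holomorphic function on the compact curve $X(1)$ that vanishes at the cusp, so it is zero.

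Two side remarks are slightly off, though neither affects the argument. First, it is the $q$-series $\Phi_n(f(q))$, not the polynomial $\Phi_n$, that has zero constant term. Second, propagating $J$ along divisibility chains would need complete replicability, which is not assumed here; for multiples of $N$ the descent theorem gives this directly anyway.
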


This conclusion uses no genus-zero hypothesis.  In particular, every
normalized replicable Hauptmodul for a group containing some $\Gamma_0(N)$ is
$J$-final.  More generally, the proof requires no arithmetic condition on the
Fourier coefficients and no compatibility among iterated replicates.  Its main
input is a congruence generation lemma that converts the invariance visible in
the replication formula into the full group
$\Gamma_0(N/(N,n))$.  In this way the level formula is obtained directly from
the identities themselves.

For comparison, Carnahan obtained the same formula for integral completely
replicable functions by using the classification of Alexander, Cummins, McKay,
and Simons together with Ferenbaugh's computation of fixing groups
\cite{alexander-cummins-mckay-simons,ferenbaugh,carnahan}.  Norton's earlier
work treats indices prime to the level in the Hauptmodul setting, including
extensions beyond rational coefficients and genus zero
\cite{norton-nonmonstrous,cummins-norton}.  Theorem~\ref{thm:intro-descent}
handles arbitrary indices without invoking a classified list.

The descent theorem also clarifies the end of a finite replication tower.  If
$f$ is completely replicable of finite replication order $K$, then $f^{(K)}$
is independent of the chosen order, has replication order one, and is reached
from every branch after further replication.  We call it the \emph{terminal
replicate}.  Combining Kozlov's order-one modular-equation theorem with the
classification recorded by Gannon gives a short list of possibilities
\cite{kozlov,gannon-hauptmodul}.

\begin{thm}\label{thm:intro-terminal}
Let $f$ be a completely replicable function of finite order $K$. Then
\[
f^{(K)}\in\{J,\ q^{-1},\ q^{-1}+q\}.
\]
If the symmetry group of $f$ is larger than the translation group, then $f^{(K)}=J$.
\end{thm}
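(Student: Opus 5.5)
Since $f$ has replication order $K$, we have $f^{(K)}=(f^{(K)})^{(m)}$ for every $m$; complete replicability together with the composition law $(f^{(a)})^{(b)}=f^{(ab)}$ gives $f^{(K)(m)}=f^{(Km)}=f^{(K)}$ whenever $f^{(Km)}=f^{(K)}$. The last equality is what finite order $K$ means, i.e.\ the replicates are periodic, $f^{(n)}=f^{((n,K))}$ or equivalently $f^{(n+K)}=f^{(n)}$. Hence $g:=f^{(K)}$ is a completely replicable function all of whose replicates equal itself, i.e.\ $g$ has replication order one. The first step is to record this carefully from the definition of replication order used in the paper.

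The second step is to apply Kozlov's theorem to $g$: a replicable function equal to all its replicates satisfies the Hecke-type modular equations $P_n(g)(\tau)=\sum_{ad=n,\,0\le b<d} g\bigl((a\tau+b)/d\bigr)$ for every $n$, with $P_n$ the $n$th Faber polynomial of $g$. This is the ``order-one'' situation. Kozlov's theorem, combined with the classification recorded by Gannon of solutions of these equations (the ``modular fictions'' together with $J$), restricts $g$ to $J$, $q^{-1}$, or $q^{-1}+q$. I would cite this directly, checking only that the normalization (zero constant term) matches. The trivial cases $q^{-1}$ and $q^{-1}+q$ do satisfy the identities, since their Faber polynomials reduce to $q^{-n}$ and to Chebyshev-type expressions $q^{-n}+q^n$.

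For the second assertion, suppose the symmetry group of $f$ properly contains the translations. The step I expect to be the main obstacle is reducing to Theorem~\ref{thm:intro-descent}, since ``larger than the translation group'' must be turned into invariance under some $\Gamma_0(N)$. I would first try to use the known result that a completely replicable function with a nontrivial non-translation symmetry is a Hauptmodul for a group commensurable with $SL_2(\Z)$ (Cummins--Gannon/Martin type results). Then I would invoke the fact that finite-order replicable Hauptmoduln have groups containing some $\Gamma_0(N)$.

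Once $\Gamma_0(N)$-invariance holds, Corollary~\ref{cor:intro-J} gives $f^{(n)}=J$ for $N\mid n$. Taking $n=NK$ and using periodicity yields $f^{(K)}=f^{(NK)}=J$. As a consistency check, the alternatives $q^{-1}$ and $q^{-1}+q$ are excluded directly, because neither is invariant under any $\Gamma_0(N)$: $q^{-1}$ is unbounded at every cusp other than $\infty$, and $q^{-1}+q=2\cos(2\pi\tau)$ is not invariant under $\tau\mapsto\tau/(N\tau+1)$. This confirms that any extra symmetry forces $f^{(K)}=J$.
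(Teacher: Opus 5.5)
Your reduction of $g=f^{(K)}$ to replication order one is correct; it is the paper's Proposition~\ref{prop:common-terminal}, used in place of Kozlov's order formula. One side remark: periodicity $f^{(n+K)}=f^{(n)}$ is not equivalent to $f^{(n)}=f^{((n,K))}$. For $\omega$ a primitive cube root of unity, $u_\omega$ is $3$-periodic but $u_\omega^{(2)}=u_{\omega^2}\neq u_\omega$. This is harmless, since you only use $f^{(Km)}=f^{(K)}$.

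The first real gap is in the classification step. The identity $P_n(g)=\sum g((a\tau+b)/d)$ is just the definition of order one. The content of Kozlov's Proposition~3.3 is that it yields polynomial modular equations of every order $n\geq2$. The classification recorded by Gannon for such functions has \emph{four} normalized members: $J$, $q^{-1}$, $q^{-1}+q$ and $q^{-1}-q$. Citing it ``directly'' therefore leaves $g=q^{-1}-q$ open, and you must exclude it. By Proposition~\ref{prop:trig-family}, $u_{-1}^{(2)}=u_1\neq u_{-1}$, which contradicts $g^{(2)}=g$ because replicates are unique. Verifying that $q^{-1}$ and $q^{-1}+q$ satisfy the identities is the converse direction and is not what the statement needs.

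The second assertion is not yet proved. Being a Hauptmodul for a group commensurable with $\mathrm{SL}_2(\Z)$ does not give invariance under any $\Gamma_0(N)$. The ``fact'' that finite-order replicable Hauptmoduln have groups containing some $\Gamma_0(N)$ is not an independent lemma at this generality; without integrality it is precisely the congruence conclusion you need. Martin's theorem cannot be used either, since it assumes $J$-finality, which is what you are proving.

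The missing bridge is as follows. Kozlov shows that a completely replicable function of finite order $K$ satisfies modular equations of every order coprime to $K$, in particular of every order $n\equiv1\pmod K$. This is the hypothesis of Cummins--Gannon's Theorem~1.3. Its nontranslation case applies because $G(f)\neq G_{\mathrm{tr}}$, with no arithmetic hypothesis on the coefficients. It gives directly that $f$ is a Hauptmodul for $G(f)\supseteq\Gamma_0(N)$ with $N\mid K^\infty$. From there your finish $f^{(K)}=f^{(NK)}=J$ is correct, and slightly shorter than the paper's route through $(N,K)\in\mathcal D_J(f)$ and upward closure.

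Your closing ``consistency check'' is unnecessary, and its reason for $q^{-1}$ is wrong. Since $|q^{-1}|=e^{2\pi\operatorname{Im}\tau}$, it stays bounded as $\tau$ approaches any rational cusp. The correct reason is that $q^{-1}$ is not invariant under $\tau\mapsto\tau/(N\tau+1)$.
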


The exclusion of $q^{-1}-q$ from the order-one list is elementary: its second
replicate is $q^{-1}+q$.  The trichotomy therefore holds without integrality or
algebraicity assumptions on the coefficients.  The $J$ alternative has a
particularly simple intrinsic meaning: $f$ is $J$-final if and only if its
terminal replicate is $J$.  Finally, the nontranslation theorem of Cummins and
Gannon, followed by our congruence descent, shows that either of the two
trigonometric terminal functions can occur only when the original symmetry
group is precisely the translation group.  Thus congruence symmetry not only
descends through replication; in every finite tower with genuine modular
symmetry, it determines the terminal object.

\section{Replicability and finite replication order}

Throughout, normalized series are written in the form
\begin{equation}\label{eq:normalized-series}
f(q)=q^{-1}+\sum_{r\geq1}a_rq^r.
\end{equation}
In particular, the constant term is zero. For $n\geq1$, the $n$th Faber polynomial $\Phi_n(X)$ is the unique monic polynomial of degree $n$ such that
\begin{equation}\label{eq:Faber-normalization}
\Phi_n(f(q))=q^{-n}+O(q).
\end{equation}

\begin{defn}\label{def:replicable}
A normalized formal series $f$ is replicable if there are normalized formal series
\[
f^{(a)}(q)=q^{-1}+O(q),\qquad a\geq1,
\]
with $f^{(1)}=f$, such that for every $n\geq1$,
\begin{equation}\label{eq:replication}
\Phi_n(f(\tau))
=
\sum_{ad=n}\ \sum_{0\leq b<d}
 f^{(a)}\!\left(\frac{a\tau+b}{d}\right).
\end{equation}
The series $f^{(a)}$ is the $a$th replicate of $f$. A replicable function is a replicable series whose $q$-expansion converges on the punctured unit disc and hence defines a holomorphic function on $\Hh$.
\end{defn}

The replicate family is unique when it exists. The term with $a=n$ and $d=1$ in \eqref{eq:replication} is $f^{(n)}(n\tau)$, while all other replicates have smaller indices, giving an induction on $n$.

\begin{prop}\label{prop:analytic-replicates}
Let $f$ be a replicable function. Then every replicate $f^{(n)}$ is holomorphic on $\Hh$.
\end{prop}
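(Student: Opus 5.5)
We argue by strong induction on $n$, using the uniqueness recursion noted after Definition~\ref{def:replicable}. Isolating the term $a=n$, $d=1$ in \eqref{eq:replication} gives
\begin{equation*}
f^{(n)}(n\tau)=\Phi_n(f(\tau))-\sum_{\substack{ad=n\\ a<n}}\ \sum_{0\leq b<d} f^{(a)}\!\left(\frac{a\tau+b}{d}\right).
\end{equation*}
The case $n=1$ is the hypothesis that $f=f^{(1)}$ is a replicable function, hence holomorphic on $\Hh$. Now fix $n>1$ and assume each $f^{(a)}$ with $a<n$ is holomorphic on $\Hh$.

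We then show that the right-hand side is holomorphic on $\Hh$. The term $\Phi_n(f(\tau))$ is a polynomial in the holomorphic function $f$, so it is holomorphic. For $ad=n$, $d>1$ and $0\le b<d$, the map $\tau\mapsto(a\tau+b)/d$ sends $\Hh$ to itself holomorphically, because $a/d>0$. By the induction hypothesis each summand $f^{(a)}((a\tau+b)/d)$ is therefore holomorphic on $\Hh$. Consequently $g(\tau):=f^{(n)}(n\tau)$ agrees, as a formal $q$-series, with a function holomorphic on $\Hh$.

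Next we pass from this identity to convergence of $f^{(n)}$ itself. The right-hand side is invariant under $\tau\mapsto\tau+1$. This holds for $\Phi_n(f)$ because $f$ has an integral-power $q$-expansion. It holds for the double sum because $\tau\mapsto\tau+1$ permutes the residues $b \bmod d$ for each fixed $a,d$: it sends $(a\tau+b)/d$ to $(a\tau+(b+a))/d$, and translation by $1$ in the argument of $f^{(a)}$ absorbs multiples of $d$ in $b+a$. The right-hand side is also holomorphic on $\Hh$. It therefore has a Laurent expansion in $q$ converging on $0<|q|<1$. By uniqueness of the formal identity this expansion coincides with the formal series $f^{(n)}(q^n)=q^{-n}+\sum_{r\geq1} c_r q^{nr}$. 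Setting $Q=q^n$, the series $\sum c_r Q^r$ converges whenever $|Q|<1$, since every $Q$ with $0<|Q|<1$ is of the form $q^n$ with $0<|q|<1$. Hence $f^{(n)}$ converges on the punctured unit disc and defines a holomorphic function on $\Hh$, which completes the induction.

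The main obstacle is the bookkeeping in this last step. We must justify that the formal identity \eqref{eq:replication}, whose terms $f^{(a)}((a\tau+b)/d)$ involve fractional powers $q^{1/d}$, becomes an identity of convergent functions whose combined expansion uses only integral powers of $q$. The plan is to observe that summing over $b$ kills every coefficient of $q^{k/d}$ with $d\nmid k$; this is the standard Hecke-operator computation $\sum_b e^{2\pi i kb/d}=0$ for such $k$. With that in hand, convergence of each inner sum, already guaranteed by the induction hypothesis, transfers directly to the formal power series identity.
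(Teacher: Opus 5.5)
Your proposal is correct and follows the paper's argument. Both proofs induct on $n$, isolate the $a=n$, $d=1$ term so that $f^{(n)}(n\tau)$ equals a combination of terms already known to be convergent and holomorphic, and identify the convergent expansion of that combination with the formal series through the replication identity. The only difference is cosmetic: you use $1$-periodicity and the substitution $Q=q^n$ to get convergence on the whole punctured disc directly, whereas the paper rescales by $z\mapsto z/n$ and states convergence near infinity with holomorphic extension to $\Hh$, which is equivalent.
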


\begin{proof}
Proceed by induction on $n$, simultaneously proving that the formal $q$-expansion of each replicate converges near infinity and extends holomorphically to $\Hh$. The assertion is true for $n=1$. Assume it for all replicates with index smaller than $n$. Define
\begin{equation}\label{eq:isolate-replicate}
G_n(\tau)
:=
\Phi_n(f(\tau))
-
\sum_{\substack{ad=n\\a<n}}
\ \sum_{0\leq b<d}
 f^{(a)}\!\left(\frac{a\tau+b}{d}\right).
\end{equation}
Every term on the right is holomorphic for $\tau\in\Hh$: the first because $f$ is holomorphic, and the remaining terms by the induction hypothesis. Thus $G_n$ is holomorphic on $\Hh$.

For $\operatorname{Im}(\tau)$ sufficiently large, the convergent Fourier expansions of the terms on the right may be substituted into \eqref{eq:isolate-replicate}. By the formal replication identity, the resulting Fourier expansion is precisely the formal series $f^{(n)}(n\tau)$. Therefore
\[
F_n(z):=G_n(z/n),\qquad z\in\Hh,
\]
is holomorphic on $\Hh$ and has near infinity exactly the formal $q$-expansion of $f^{(n)}$. Thus that formal series converges near infinity and its sum extends holomorphically to $\Hh$. This completes the induction.
\end{proof}

Carnahan records the same observation in his treatment of finite-order replicable functions \cite[Section~5]{carnahan}. The proof is recalled because the same isolation of the last replicate is used in the congruence descent argument.

\begin{defn}\label{def:complete}
A replicable function is completely replicable if every replicate is replicable and
\begin{equation}\label{eq:complete-composition}
\bigl(f^{(a)}\bigr)^{(b)}=f^{(ab)}
\end{equation}
for all positive integers $a,b$.
\end{defn}

Following Kozlov, a completely replicable function has finite replication order $K$ if
\begin{equation}\label{eq:finite-order}
f^{(s)}=f^{((s,K))}
\qquad(s\geq1).
\end{equation}
Here $K$ is a chosen integer satisfying the relation; minimality is not part of Kozlov's definition. The integer $K$ need not be introduced through modular invariance. It is an intrinsic periodicity condition on the replicate tower. Kozlov proved that if $f$ has order $K$, then $f^{(s)}$ has order $K/(s,K)$ in the corresponding sense \cite{kozlov}.

The normalized modular invariant is
\begin{equation}\label{eq:J}
J(\tau)=j(\tau)-744=q^{-1}+196884q+21493760q^2+\cdots.
\end{equation}
All replicates of $J$ are equal to $J$.

\begin{defn}\label{def:J-final}
A replicable function $f$ is $J$-final if
\[
f^{(n)}=J
\]
for at least one positive integer $n$.
\end{defn}

\section{A congruence generation lemma}

For $m,n\geq1$, write
\[
\Gamma^0(n)=
\left\{
\begin{pmatrix}a&b\\c&d\end{pmatrix}\in\mathrm{SL}_2(\Z):
 b\equiv0\pmod n
\right\}
\]
and
\[
\Gamma_0(m,n)=\Gamma_0(m)\cap\Gamma^0(n).
\]
Let
\[
T=\begin{pmatrix}1&1\\0&1\end{pmatrix}.
\]

\begin{lem}\label{lem:generation}
For all positive integers $m,n$,
\begin{equation}\label{eq:generation}
\Gamma_0(m)=\left\langle\Gamma_0(m,n),T\right\rangle.
\end{equation}
\end{lem}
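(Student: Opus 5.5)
We prove the nontrivial inclusion $\Gamma_0(m)\subseteq\langle\Gamma_0(m,n),T\rangle$; the reverse inclusion is clear, since $\Gamma_0(m,n)\subseteq\Gamma_0(m)$ and $T\in\Gamma_0(m)$. Write $H=\langle\Gamma_0(m,n),T\rangle$ and fix $\gamma=\begin{pmatrix}a&b\\c&d\end{pmatrix}\in\Gamma_0(m)$. The idea is to multiply $\gamma$ by powers of $T$ on the left and right until the upper-right entry is divisible by $n$. The resulting matrix lies in $\Gamma_0(m,n)$, and therefore $\gamma\in H$. Multiplication by $T^k$ on either side preserves membership in $\Gamma_0(m)$, because it does not change the lower-left entry $c$. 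It changes the other entries as follows:
\[
T^k\gamma=\begin{pmatrix}a+kc&b+kd\\c&d\end{pmatrix},\qquad
\gamma T^k=\begin{pmatrix}a&b+ka\\c&d+kc\end{pmatrix}.
\]

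So it suffices to find integers $k,l$ with $T^k\gamma T^l$ having upper-right entry divisible by $n$. By the formulas above, that entry is
\[
b' = b + kd + l(a+kc).
\]
The plan has two steps. First, choose $k$ so that $a+kc$ is coprime to $n$. Second, solve the congruence $b+kd+l(a+kc)\equiv 0\pmod n$ for $l$, which is possible because $a+kc$ is invertible modulo $n$.

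The first step is the main obstacle. It is a Dirichlet-type coprimality selection, and it only uses $\gcd(a,c)=1$, which follows from $ad-bc=1$. For each prime $p\mid n$, we need $p\nmid a+kc$. If $p\mid c$, then $p\nmid a$, so every $k$ works at $p$. If $p\nmid c$, exactly one residue class of $k$ modulo $p$ is bad, so we choose $k\not\equiv -a c^{-1}\pmod p$. Combining these finitely many conditions by the Chinese remainder theorem gives an integer $k$ with $\gcd(a+kc,n)=1$. With this $k$, we take $l\equiv -(b+kd)(a+kc)^{-1}\pmod n$.

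Then $T^k\gamma T^l\in\Gamma_0(m)\cap\Gamma^0(n)=\Gamma_0(m,n)$. Hence $\gamma=T^{-k}(T^k\gamma T^l)T^{-l}\in H$, which proves \eqref{eq:generation}. The argument uses only the Chinese remainder theorem; in particular it needs no coprimality assumption relating $m$ and $n$.
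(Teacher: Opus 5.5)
Your proof is correct. It differs from the paper's in how the upper-left entry is made coprime to $n$.

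The paper right-multiplies $\gamma$ by the lower unipotent matrix $L_m(t)=\begin{pmatrix}1&0\\mt&1\end{pmatrix}$, which already lies in $\Gamma_0(m,n)$. This replaces $a$ by $a+bmt$, and choosing $t$ needs a small case analysis: it uses $\gcd(a,b)=1$ together with the observation that $p\mid a$ forces $p\nmid m$. The paper then applies a single power of $T$ on the right, so it writes $\gamma$ as an element of $\Gamma_0(m,n)\,T^{\mathbb{Z}}\,\Gamma_0(m,n)$.

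You instead left-multiply by $T^k$, which replaces $a$ by $a+kc$. Your coprimality selection therefore uses only $\gcd(a,c)=1$ and never involves $m$. You also obtain the slightly sharper statement $\Gamma_0(m)=T^{\mathbb{Z}}\,\Gamma_0(m,n)\,T^{\mathbb{Z}}$, in which $\Gamma_0(m,n)$ appears only once.

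For the application in the congruence descent theorem, either form suffices, since $f^{(n)}$ is invariant under both $T$ and $\Gamma_0(m,n)$.
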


\begin{proof}
Let
\[
\gamma=\begin{pmatrix}a&b\\c&d\end{pmatrix}\in\Gamma_0(m).
\]
First modify the upper-left entry without changing the upper-right entry. For $t\in\Z$ put
\[
L_m(t)=\begin{pmatrix}1&0\\mt&1\end{pmatrix}.
\]
Then $L_m(t)\in\Gamma_0(m,n)$ and
\[
\gamma L_m(t)=
\begin{pmatrix}
a+bmt&b\\c+dmt&d
\end{pmatrix}.
\]
The integer $t$ may be chosen so that
\begin{equation}\label{eq:a-prime-n}
(a+bmt,n)=1.
\end{equation}
Fix a prime $p\mid n$. If $p\mid m$ and $p\mid a$, then $p\mid c$ because $m\mid c$, and the determinant relation $ad-bc=1$ would be impossible modulo $p$. Hence, whenever $p\mid a$, one has $p\nmid m$. Since $(a,b)=1$, one also has $p\nmid b$. Thus, if $p\mid a$, there is exactly one residue class of $t$ modulo $p$ for which $a+bmt\equiv0\pmod p$, and it can be avoided. If $p\nmid a$, there is again at most one forbidden residue when $p\nmid bm$, while there is no forbidden residue when $p\mid bm$. Choosing an allowed residue modulo every prime divisor of $n$ and applying the Chinese remainder theorem gives \eqref{eq:a-prime-n}.

Set
\[
\gamma_1=\gamma L_m(t)=
\begin{pmatrix}a_1&b\\c_1&d\end{pmatrix},
\qquad (a_1,n)=1.
\]
Choose $k\in\Z$ such that
\[
a_1k+b\equiv0\pmod n.
\]
Then
\[
\gamma_1T^k=
\begin{pmatrix}
a_1&a_1k+b\\c_1&c_1k+d
\end{pmatrix}
\in\Gamma_0(m,n).
\]
Therefore
\[
\gamma=(\gamma_1T^k)T^{-k}L_m(t)^{-1}
\]
belongs to the subgroup generated by $\Gamma_0(m,n)$ and $T$. The reverse inclusion is immediate.
\end{proof}

The lemma converts invariance under the conjugated subgroup into invariance under the full congruence subgroup, without any coprimality assumption on $m$ and $n$.

\section{Congruence descent along the replicate tower}

For $a,d\geq1$ and $0\leq b<d$, write
\[
\alpha_{a,b,d}=\begin{pmatrix}a&b\\0&d\end{pmatrix}.
\]
Its determinant is $ad$ and it acts on $\Hh$ by
\[
\alpha_{a,b,d}\tau=\frac{a\tau+b}{d}.
\]

\begin{thm}\label{thm:congruence-descent}
Let $f$ be a normalized replicable function. Suppose that $f$ is invariant under $\Gamma_0(N)$. Then for every $n\geq1$,
\begin{equation}\label{eq:descent-level}
f^{(n)}\ \text{is invariant under}\ 
\Gamma_0\!\left(\frac{N}{(N,n)}\right).
\end{equation}
\end{thm}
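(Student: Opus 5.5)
We argue by strong induction on $n$. The idea is to isolate the last replicate exactly as in the proof of Proposition~\ref{prop:analytic-replicates}, show that $f^{(n)}(n\tau)$ is invariant under $\Gamma_0(\operatorname{lcm}(n,N))$, and then undo the dilation by $n$ using Lemma~\ref{lem:generation}. Write $M_a=N/(N,a)$ and $L=\operatorname{lcm}(n,N)=nM_n$. The case $n=1$ is the hypothesis. Assume the statement for all indices $a<n$, so that $f^{(a)}$ is $\Gamma_0(M_a)$-invariant.

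\emph{Step 1: invariance of $G_n$.} Consider $G_n(\tau)=f^{(n)}(n\tau)$, given by \eqref{eq:isolate-replicate}. The term $\Phi_n(f)$ is $\Gamma_0(N)$-invariant, hence $\Gamma_0(L)$-invariant. Fix a factorization $ad=n$ with $a<n$ and put
\[
S_{a,d}(\tau)=\sum_{0\le b<d}f^{(a)}(\alpha_{a,b,d}\tau).
\]
We claim $S_{a,d}$ is $\Gamma_0(L)$-invariant. For $\gamma=\begin{pmatrix}\ast&\ast\\ c&\ast\end{pmatrix}\in\Gamma_0(L)$ and each $b$, look for $b'$ with $0\le b'<d$ such that $\gamma'=\alpha_{a,b,d}\,\gamma\,\alpha_{a,b',d}^{-1}$ is integral and lies in $\Gamma_0(M_a)$, and such that $b\mapsto b'$ is a bijection of $\{0,\dots,d-1\}$.

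The lower-left entry of $\gamma'$ is $cd/a$. Since $c$ is divisible by $L$, we have $cd/a=(c/n)d^2\in\Z$. Moreover $c/a$ is divisible by $\operatorname{lcm}(n,N)/a$, which is divisible by $\operatorname{lcm}(a,N)/a=M_a$. So the lower-left entry is divisible by $M_a$.

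For the other entries, choose $b'$ to be the residue modulo $d$ forced by integrality of the upper-right entry of $\gamma'$. Because $c\equiv0\pmod n$, this congruence has the shape $b'\equiv(\text{unit})\cdot b+(\text{const})\pmod d$, with the unit coming from the diagonal entries of $\gamma$ modulo $d$. Hence $b\mapsto b'$ is a permutation. Then $f^{(a)}(\alpha_{a,b,d}\gamma\tau)=f^{(a)}(\gamma'\alpha_{a,b',d}\tau)=f^{(a)}(\alpha_{a,b',d}\tau)$, so the sum $S_{a,d}$ is permuted into itself. Together with the invariance of $\Phi_n(f)$, this gives the $\Gamma_0(L)$-invariance of $G_n$.

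\emph{Step 2: undoing the dilation.} Let $F_n(z)=G_n(z/n)=f^{(n)}(z)$. Invariance of $G_n$ under $\gamma=\begin{pmatrix}a&b\\c&d\end{pmatrix}$ translates into invariance of $F_n$ under
\[
\begin{pmatrix}a&nb\\c/n&d\end{pmatrix}.
\]
As $\gamma$ ranges over $\Gamma_0(nM_n)$, these matrices fill out exactly $\Gamma_0(M_n)\cap\Gamma^0(n)=\Gamma_0(M_n,n)$. The function $f^{(n)}$ is also $T$-invariant, since it is a $q$-series, and it is holomorphic on $\Hh$ by Proposition~\ref{prop:analytic-replicates}. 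Lemma~\ref{lem:generation} with $m=M_n$ then gives invariance under $\Gamma_0(M_n)=\Gamma_0(N/(N,n))$, completing the induction.

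I expect the main obstacle to be the bookkeeping in Step~1. One must verify that the upper-right entry can be made integral by a unique choice of $b'$ modulo $d$, and that this choice is a permutation of the residues. This includes the non-primitive matrices $\alpha_{a,b,d}$ with $(a,b,d)>1$, where the usual Hecke coset argument is usually phrased differently. My first attempt is to compute $\gamma'$ explicitly. Its entries are
\[
a_\gamma+\tfrac{b c_\gamma}{a},\qquad \tfrac{d c_\gamma}{a},\qquad d_\gamma-\tfrac{b' c_\gamma}{a},
\]
together with an upper-right entry of the form $\bigl(a(a_\gamma b'\cdot(-1)+\cdots)\bigr)/d$. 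I would then check integrality directly, using $n\mid c_\gamma$ and $a_\gamma d_\gamma\equiv1\pmod d$.
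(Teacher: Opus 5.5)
Your proposal is correct and follows the paper's proof step for step. It uses the same strong induction and the same conjugation $\alpha_{a,b,d}\gamma\alpha_{a,b',d}^{-1}$, together with $M_a\mid \operatorname{lcm}(N,n)/a$, to show each branch sum $S_{a,d}$ is $\Gamma_0(\operatorname{lcm}(N,n))$-invariant. It then undoes the dilation $\tau\mapsto n\tau$ to land in $\Gamma_0(M_n)\cap\Gamma^0(n)$ and finishes with $T$-invariance and Lemma~\ref{lem:generation}.

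On the bookkeeping you flag, write $\gamma=\left(\begin{smallmatrix}A&B\\C&D\end{smallmatrix}\right)$. The upper-right entry is $\frac{aB+bD-Ab'}{d}-\frac{bb'C}{ad}$, so the condition on $b'$ is exactly $Ab'\equiv aB+bD\pmod d$. This is an affine bijection on residues mod $d$ because $AD\equiv1\pmod d$. No primitivity of $\alpha_{a,b,d}$ is needed.
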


\begin{proof}
The proof is by induction on $n$. For $n=1$ the assertion is the hypothesis. Assume it holds for every proper divisor of $n$ and set
\[
L=\operatorname{lcm}(N,n),
\qquad
M_a=\frac{N}{(N,a)}
\quad(a\mid n).
\]
One has the divisibility
\begin{equation}\label{eq:lcm-divisibility}
M_a=\frac{N}{(N,a)}\mid\frac{L}{a}.
\end{equation}
Indeed,
\[
aM_a=\operatorname{lcm}(a,N)
\]
divides $\operatorname{lcm}(n,N)=L$ because $a\mid n$.

Fix a factorization $ad=n$ with $a<n$ and define
\begin{equation}\label{eq:branch-sum}
S_{a,d}(\tau)=
\sum_{b\bmod d}
 f^{(a)}\!\left(\frac{a\tau+b}{d}\right).
\end{equation}
By the induction hypothesis, $f^{(a)}$ is invariant under $\Gamma_0(M_a)$. It remains to show that $S_{a,d}$ is invariant under $\Gamma_0(L)$.

Take
\[
\gamma=\begin{pmatrix}A&B\\C&D\end{pmatrix}\in\Gamma_0(L).
\]
Since $d\mid n\mid L$, reduction of $AD-BC=1$ modulo $d$ gives $AD\equiv1\pmod d$, so $A$ is invertible modulo $d$. For every $b\bmod d$ there is therefore a unique $b'\bmod d$ satisfying
\begin{equation}\label{eq:bprime}
Ab'\equiv aB+bD\pmod d.
\end{equation}
Consider
\[
\delta_{b}=\alpha_{a,b,d}\,\gamma\,\alpha_{a,b',d}^{-1}.
\]
A direct multiplication gives
\begin{equation}\label{eq:delta-matrix}
\delta_b=
\begin{pmatrix}
A+\dfrac{bC}{a}
&
\dfrac{aB+bD-Ab'}{d}-\dfrac{bb'C}{ad}
\\[3mm]
\dfrac{dC}{a}
&
D-\dfrac{b'C}{a}
\end{pmatrix}.
\end{equation}
All four entries are integers. For the diagonal entries this follows from $a\mid n\mid L\mid C$. The first term in the upper-right entry is integral by \eqref{eq:bprime}, and the second is integral because $ad=n\mid C$. The determinant is one because $\delta_b$ is a conjugated product of determinant-one.

It remains to check the lower-left congruence. By \eqref{eq:lcm-divisibility}, $M_a\mid L/a$, while $L\mid C$. Hence
\[
M_a\mid\frac{C}{a}\mid\frac{dC}{a}.
\]
Thus $\delta_b\in\Gamma_0(M_a)$. Consequently
\[
f^{(a)}\bigl(\alpha_{a,b,d}\gamma\tau\bigr)
=
f^{(a)}\bigl(\delta_b\alpha_{a,b',d}\tau\bigr)
=
f^{(a)}\bigl(\alpha_{a,b',d}\tau\bigr).
\]
The map $b\mapsto b'$ is a permutation modulo $d$: by \eqref{eq:bprime} it is an affine map with linear coefficient $A^{-1}D$, a unit modulo $d$. Thus summing over $b$ proves
\[
S_{a,d}(\gamma\tau)=S_{a,d}(\tau).
\]
Return to the replication identity and isolate the term $a=n,d=1$:
\begin{equation}\label{eq:isolate-n-descent}
f^{(n)}(n\tau)
=
\Phi_n(f(\tau))
-
\sum_{\substack{ad=n\\a<n}}S_{a,d}(\tau).
\end{equation}
Because $N\mid L$, the function $\Phi_n(f)$ is invariant under $\Gamma_0(L)$. Every branch sum on the right is also invariant under $\Gamma_0(L)$ by the preceding argument. Hence
\[
g_n(\tau):=f^{(n)}(n\tau)
\]
is invariant under $\Gamma_0(L)$.

Let
\[
\beta_n=\begin{pmatrix}n&0\\0&1\end{pmatrix}.
\]
Since $g_n=f^{(n)}\circ\beta_n$, conjugation gives invariance of $f^{(n)}$ under
\begin{equation}\label{eq:conjugate-subgroup}
\beta_n\Gamma_0(L)\beta_n^{-1}
=
\Gamma_0(L/n)\cap\Gamma^0(n).
\end{equation}
Here
\[
\frac{L}{n}=\frac{N}{(N,n)}.
\]
If $\gamma=\left(\begin{smallmatrix}A&B\\C&D\end{smallmatrix}\right)$ with $L\mid C$, then
\[
\beta_n\gamma\beta_n^{-1}
=
\begin{pmatrix}A&nB\\C/n&D\end{pmatrix},
\]
whose upper-right entry is divisible by $n$ and whose lower-left entry is divisible by $L/n$. Conversely every matrix in the intersection on the right of \eqref{eq:conjugate-subgroup} arises in this way.

The ordinary integral-power $q$-expansion of $f^{(n)}$ gives invariance under $T:\tau\mapsto\tau+1$. Lemma~\ref{lem:generation}, with $m=L/n=N/(N,n)$, then gives invariance under the full group $\Gamma_0(N/(N,n))$.
\end{proof}

\begin{remark}\label{rem:carnahan-comparison}
Carnahan records \eqref{eq:descent-level} for completely replicable functions with rational integer coefficients, using the exhaustive enumeration of \cite{alexander-cummins-mckay-simons} and the fixing groups computed in \cite{ferenbaugh}; see \cite[Corollary~5.6]{carnahan}. Theorem~\ref{thm:congruence-descent} derives the formula directly from the replication identity, so neither complete replicability nor coefficient integrality is required.
\end{remark}

The level formula is prime-local. If
\[
N=\prod_p p^{e_p},
\qquad
n=\prod_p p^{r_p},
\]
then the guaranteed level of $f^{(n)}$ is
\begin{equation}\label{eq:valuation-level}
\frac{N}{(N,n)}
=
\prod_p p^{\max(e_p-r_p,0)}.
\end{equation}
The guaranteed $\Gamma_0$-level therefore loses the prime-power factors contributed by the replication index.

\begin{cor}\label{cor:coprime-preserve}
Under the hypotheses of Theorem~\ref{thm:congruence-descent}:
\begin{enumerate}
\item if $(n,N)=1$, then $f^{(n)}$ is invariant under $\Gamma_0(N)$;
\item if $N\mid n$, then $f^{(n)}$ is invariant under $\mathrm{SL}_2(\Z)$.
\end{enumerate}
\end{cor}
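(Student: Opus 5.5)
Both parts come directly from Theorem~\ref{thm:congruence-descent} once the level $N/(N,n)$ is computed in the two cases, so the plan is to specialize the level formula \eqref{eq:descent-level}.

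For part (1), assume $(n,N)=1$. Then $(N,n)=1$, so $N/(N,n)=N$, and Theorem~\ref{thm:congruence-descent} says that $f^{(n)}$ is invariant under $\Gamma_0(N)$. Equivalently, in the valuation formula \eqref{eq:valuation-level} every prime $p\mid N$ has $r_p=0$, so $\max(e_p-r_p,0)=e_p$ for all such $p$.

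For part (2), assume $N\mid n$. Then $(N,n)=N$, so the guaranteed level is $N/(N,n)=1$. The theorem gives invariance under $\Gamma_0(1)$. By definition $\Gamma_0(1)$ imposes no condition on the lower-left entry, so $\Gamma_0(1)=\mathrm{SL}_2(\Z)$.

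There is no real obstacle. The only point worth checking is that the theorem applies for all $n\geq1$ with no extra hypotheses, in particular none on the replicates beyond replicability of $f$. That is exactly how Theorem~\ref{thm:congruence-descent} is stated, since its induction covers every $n$. I would also remark that Lemma~\ref{lem:generation} is not needed again here, because it has already been used inside the theorem.
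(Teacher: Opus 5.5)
Your proposal is correct and is the same argument as the paper's. The paper gives no separate proof and treats the corollary as the direct specialization of the level $N/(N,n)$ in Theorem~\ref{thm:congruence-descent} to the cases $(N,n)=1$ and $(N,n)=N$, using $\Gamma_0(1)=\mathrm{SL}_2(\Z)$.
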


\begin{remark}
Cummins and Norton note that Norton's earlier work treats replication by an index prime to $N$ in their Hauptmodul setting, and that the argument extends to irrational coefficients and higher genus \cite{cummins-norton,norton-nonmonstrous}. Corollary~\ref{cor:coprime-preserve}(1) is obtained here from the more general descent formula for arbitrary $n$, under the different starting hypotheses of replicability and $\Gamma_0(N)$-invariance.
\end{remark}

\begin{cor}\label{cor:J-final-level}
Let $f$ satisfy the hypotheses of Theorem~\ref{thm:congruence-descent}. If $N\mid n$, then
\begin{equation}\label{eq:multiple-N-J}
f^{(n)}=J.
\end{equation}
In particular,
\[
f^{(N)}=J.
\]
\end{cor}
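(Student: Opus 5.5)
By Corollary~\ref{cor:coprime-preserve}(2), when $N\mid n$ the replicate $f^{(n)}$ is invariant under $\mathrm{SL}_2(\Z)$. By Proposition~\ref{prop:analytic-replicates}, $f^{(n)}$ is holomorphic on $\Hh$. By Definition~\ref{def:replicable}, its $q$-expansion has the normalized shape $q^{-1}+\sum_{r\geq1}c_rq^r$. The plan is to compare $f^{(n)}$ with $J$ via the difference
\[
h:=f^{(n)}-J .
\]
The function $h$ is holomorphic on $\Hh$ and invariant under $\mathrm{SL}_2(\Z)$. Its $q$-expansion has no principal part and vanishing constant term: both $f^{(n)}$ and $J$ have principal part exactly $q^{-1}$ and constant term $0$. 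So $h=O(q)$ as $\operatorname{Im}\tau\to\infty$.

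The key step is to show that a level-one modular function, holomorphic on $\Hh$ and bounded at the cusp, is constant. I would argue via the compact modular curve. Since $h$ is holomorphic on $\Hh$, bounded at the cusp, and $\mathrm{SL}_2(\Z)$-invariant, it descends to a holomorphic function on $\mathrm{SL}_2(\Z)\backslash\Hh\cup\{\infty\}$. That space is a compact Riemann surface, so $h$ is constant.

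Alternatively, one can avoid the compactification. Apply the maximum modulus principle on the closure of the standard fundamental domain, where $|h|\to 0$ at the cusp. Then $|h|$ attains its supremum over $\Hh$ at an interior point, so $h$ is constant. A third route: $h$ is a weight-zero holomorphic modular form vanishing at infinity, hence a cusp form of weight $0$, hence zero.

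Since the constant must equal the limit at the cusp, which is $0$, we get $h=0$, that is, $f^{(n)}=J$. The special case $n=N$ follows immediately.

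The only point needing care is that $f^{(n)}$ is a genuine holomorphic function on all of $\Hh$, not merely a formal series. That is exactly what Proposition~\ref{prop:analytic-replicates} supplies, so I expect no real obstacle. The remaining check is that the normalization (constant term zero) is part of the replicate definition, which it is.
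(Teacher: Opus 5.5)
Your proposal is correct and matches the paper's proof: $\mathrm{SL}_2(\Z)$-invariance comes from Corollary~\ref{cor:coprime-preserve}(2) and holomorphy from Proposition~\ref{prop:analytic-replicates}. The paper then notes that $f^{(n)}-J$ descends to a holomorphic function on the compact curve $X(1)$, so it is constant, and the constant is zero because both functions have zero constant term; your maximum-modulus and weight-zero cusp-form variants are valid alternatives to this last step.
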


\begin{proof}
By Corollary~\ref{cor:coprime-preserve}, $f^{(n)}$ is invariant under $\mathrm{SL}_2(\Z)$. Proposition~\ref{prop:analytic-replicates} shows that it is holomorphic on $\Hh$, and by normalization
\[
f^{(n)}(\tau)=q^{-1}+O(q)
\]
at infinity. Therefore $f^{(n)}-J$ descends to a holomorphic function on the compact modular curve $X(1)$. It is constant. Both functions have zero constant term at infinity, so the constant is zero.
\end{proof}

\begin{cor}\label{cor:Hauptmodul-note}
Let $f$ be a normalized replicable Hauptmodul for a genus-zero Fuchsian group $G$ containing $\Gamma_0(N)$, with width one at infinity. Then $f$ is $J$-final and $f^{(N)}=J$.
\end{cor}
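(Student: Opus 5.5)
The argument reduces Corollary~\ref{cor:Hauptmodul-note} to Corollary~\ref{cor:J-final-level}; everything needed is already stated, and the only work is checking that its hypotheses hold.

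First, $f$ must be a normalized replicable function in the sense of Definition~\ref{def:replicable}, holomorphic on $\Hh$. It is a Hauptmodul for $G$, and $G$ has width one at infinity, so $f$ is holomorphic on $\Hh$. Its principal part at the cusp $\infty$ is a single simple pole in the local parameter $q=e^{2\pi i\tau}$, because the width is one. Normalization gives $f=q^{-1}+O(q)$, and this expansion converges on the punctured disc because $f$ is holomorphic on $\Hh$ and invariant under $T$. Replicability is assumed, so $f$ is a normalized replicable function.

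Second, $f$ must be invariant under $\Gamma_0(N)$. This is immediate: $f$ is $G$-invariant and $\Gamma_0(N)\subseteq G$.

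Corollary~\ref{cor:J-final-level} now applies with $n=N$ and gives $f^{(N)}=J$. By Definition~\ref{def:J-final} this is exactly the statement that $f$ is $J$-final.

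Genus zero and the Hauptmodul property are never used beyond holomorphy on $\Hh$ and the normalized expansion. So the only point that needs care, and it is minor, is that width one is what makes the $q$-expansion have integral powers. Without it, $f$ would be a series in a fractional power of $q$ and would not fit Definition~\ref{def:replicable}.
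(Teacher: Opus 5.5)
Your proof is correct and follows the paper's argument: you verify holomorphy on $\Hh$ and $\Gamma_0(N)$-invariance (from $\Gamma_0(N)\subseteq G$), then apply Corollary~\ref{cor:J-final-level} with $n=N$. Two small corrections to your side remarks. First, holomorphy on $\Hh$ comes from the Hauptmodul being a degree-one function on the genus-zero curve whose only (simple) pole is at the cusp $\infty$, not from the width. Second, since $T\in\Gamma_0(N)\subseteq G$, the expansion is automatically in integral powers of $q$; the width-one hypothesis is what guarantees the leading term $q^{-1}$ (rather than $q^{-k}$ for a width $1/k$), not what excludes fractional powers.
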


\begin{proof}
The normalized Hauptmodul has a simple pole at the cusp at infinity and, as a generator of the function field of the genus-zero quotient, has no other poles. In particular it is holomorphic on $\Hh$. Since $\Gamma_0(N)\subseteq G$, it is $\Gamma_0(N)$-invariant. Apply Corollary~\ref{cor:J-final-level}.
\end{proof}

In Corollary~\ref{cor:Hauptmodul-note}, the genus-zero hypothesis is used only to identify $f$ as a Hauptmodul. Once replicability, holomorphy, and $\Gamma_0(N)$-invariance are given, the proof of $J$-finality does not use genus zero.

For a completely replicable function the descent law is compatible with composition. If
\[
N_m=\frac{N}{(N,m)},
\]
then
\begin{equation}\label{eq:semigroup-level}
\frac{N_m}{(N_m,n)}
=
\frac{N}{(N,mn)}.
\end{equation}
This follows prime by prime from \eqref{eq:valuation-level}. Applying the descent theorem first to $f^{(m)}$ and then to its $n$th replicate therefore gives the same guaranteed congruence level as applying it once to $f^{(mn)}$.

\section{$J$-finality and the terminal replicate}

Let $f$ be completely replicable of finite order $K$. The order relation \eqref{eq:finite-order} singles out one replicate.

\begin{defn}\label{def:terminal}
The terminal replicate of $f$ is
\[
f^{\mathrm{term}}:=f^{(K)}.
\]
\end{defn}

\begin{remark}\label{rem:terminal-independent}
The terminal replicate is independent of the chosen finite replication order. If both $K$ and $K'$ satisfy \eqref{eq:finite-order}, then
\[
f^{(K)}=f^{((K,K'))}=f^{(K')}.
\]
Hence Definition~\ref{def:terminal} is intrinsic to the finite-order replication tower.
\end{remark}

Every branch of the replication tower reaches this order-one replicate after further replication.

\begin{prop}\label{prop:common-terminal}
Let $f$ be completely replicable of finite order $K$. For every $m\geq1$, put $g=(m,K)$. Then
\begin{equation}\label{eq:common-terminal}
\left(f^{(m)}\right)^{(K/g)}=f^{(K)}.
\end{equation}
\end{prop}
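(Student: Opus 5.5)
The plan is to combine the composition law \eqref{eq:complete-composition} with the finite-order relation \eqref{eq:finite-order}, using an elementary gcd computation. Since $f$ is completely replicable, $f^{(m)}$ is replicable and its replicates satisfy $\bigl(f^{(m)}\bigr)^{(b)}=f^{(mb)}$ for every $b\geq1$. Taking $b=K/g$ with $g=(m,K)$ gives
\[
\left(f^{(m)}\right)^{(K/g)}=f^{(mK/g)}.
\]
By \eqref{eq:finite-order} applied with $s=mK/g$, this equals $f^{((mK/g,\,K))}$. It therefore suffices to show that $(mK/g,K)=K$, that is, $K\mid mK/g$.

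The divisibility is immediate. We have $mK/g=(m/g)\,K$ and $m/g$ is an integer because $g\mid m$, so $K$ divides $mK/g$ and the gcd with $K$ is $K$ itself. (Equivalently, $mK/g=\operatorname{lcm}(m,K)$, which is a multiple of $K$.) Hence $\left(f^{(m)}\right)^{(K/g)}=f^{(K)}$, which is \eqref{eq:common-terminal}.

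I expect no real obstacle. The only point to be careful about is that \eqref{eq:complete-composition} is applied exactly as stated in Definition~\ref{def:complete}, with $a=m$ and $b=K/g$. I would also add a remark that does not affect the proof: by Kozlov's result, $f^{(m)}$ has order $K/g$, so $(f^{(m)})^{(K/g)}$ is the terminal replicate of $f^{(m)}$. The proposition therefore says that every branch of the tower reaches the same terminal object $f^{\mathrm{term}}$.
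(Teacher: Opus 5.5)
Your proposal is correct and matches the paper's proof: apply the composition law with $a=m$, $b=K/g$ to get $f^{(mK/g)}$, then the finite-order relation with $s=mK/g$, using $(mK/g,K)=K$ because $mK/g=(m/g)K$. The paper does the gcd step by writing $m=gm_0$ and $K=gK_0$, which is the same computation.
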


\begin{proof}
Complete replicability gives
\[
\left(f^{(m)}\right)^{(K/g)}
=f^{(mK/g)}.
\]
Write $m=gm_0$ and $K=gK_0$ with $(m_0,K_0)=1$. Then
\[
\left(\frac{mK}{g},K\right)
=(m_0K,K)=K.
\]
The finite-order relation \eqref{eq:finite-order} therefore yields
\[
f^{(mK/g)}=f^{(K)}.
\]
\end{proof}

\begin{prop}\label{prop:J-terminal-equivalence}
Let $f$ be completely replicable of finite order $K$. Then the following are equivalent:
\begin{enumerate}
\item $f$ is $J$-final;
\item $f^{(K)}=J$.
\end{enumerate}
\end{prop}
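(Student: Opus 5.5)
The implication (2)$\Rightarrow$(1) is immediate from Definition~\ref{def:J-final}: if $f^{(K)}=J$, then $n=K$ witnesses $J$-finality. The substance lies in (1)$\Rightarrow$(2). The plan is to route any witness index through the terminal replicate, using Proposition~\ref{prop:common-terminal} together with the fact that $J$ replicates to itself.

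For (1)$\Rightarrow$(2), suppose $f^{(m)}=J$ for some $m\geq1$, and put $g=(m,K)$. By Proposition~\ref{prop:common-terminal},
\[
f^{(K)}=\left(f^{(m)}\right)^{(K/g)}=J^{(K/g)}.
\]
All replicates of $J$ equal $J$, so $J^{(K/g)}=J$ and hence $f^{(K)}=J$.

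One subtlety needs checking. The replicate $\left(f^{(m)}\right)^{(K/g)}$ is defined via the replicate family of $f^{(m)}$, which is replicable by complete replicability. This family is unique, by the uniqueness remark following Definition~\ref{def:replicable}. Since $f^{(m)}=J$ as formal series, its replicate family coincides with that of $J$, which is constant equal to $J$.

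So no step is a real obstacle. The only point to flag is this uniqueness of replicates, which guarantees that replicating $f^{(m)}$ is the same as replicating $J$. Alternatively, one can avoid Proposition~\ref{prop:common-terminal} and compute directly: $f^{(mK)}=\left(f^{(m)}\right)^{(K)}=J$ by \eqref{eq:complete-composition}, while $(mK,K)=K$ and \eqref{eq:finite-order} give $f^{(mK)}=f^{(K)}$.
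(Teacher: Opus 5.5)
Your proof is correct and uses the same ingredients as the paper: the finite-order relation \eqref{eq:finite-order}, the composition law \eqref{eq:complete-composition}, and the fact that every replicate of $J$ is $J$. The only difference is the order of the steps. The paper first uses \eqref{eq:finite-order} to replace the witness $n$ by $d=(n,K)$ and then climbs to $K$ via $f^{(K)}=(f^{(d)})^{(K/d)}$; you climb first, through Proposition~\ref{prop:common-terminal} or through $f^{(mK)}$, and then reduce back to $K$ by \eqref{eq:finite-order}.
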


\begin{proof}
The implication (2) to (1) is immediate. Suppose that $f^{(n)}=J$ for some $n$. By finite order,
\[
f^{(n)}=f^{((n,K))}.
\]
Set $d=(n,K)$. Then $d\mid K$ and $f^{(d)}=J$. By complete replicability,
\[
f^{(K)}
=\left(f^{(d)}\right)^{(K/d)}.
\]
Every replicate of $J$ equals $J$, hence $f^{(K)}=J$.
\end{proof}

There can be more than one $J$-final index below $K$, so the replication order need not equal the least $J$-final index. The divisor set below records these indices.

\begin{defn}\label{def:J-divisors}
For a completely replicable function of finite order $K$, define
\[
\mathcal D_J(f)=\{d:d\mid K,\ f^{(d)}=J\}.
\]
\end{defn}

\begin{prop}\label{prop:J-divisor-filter}
The set $\mathcal D_J(f)$ is upward closed in the divisor lattice of $K$: if $d\in\mathcal D_J(f)$ and $d\mid e\mid K$, then $e\in\mathcal D_J(f)$. Moreover,
\begin{equation}\label{eq:J-indices}
f^{(n)}=J
\quad\Longleftrightarrow\quad
(n,K)\in\mathcal D_J(f).
\end{equation}
In particular, $f$ is $J$-final if and only if $K\in\mathcal D_J(f)$.
\end{prop}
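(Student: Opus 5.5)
Three ingredients suffice: complete replicability $(f^{(a)})^{(b)}=f^{(ab)}$, the finite-order relation \eqref{eq:finite-order}, and the fact that every replicate of $J$ is $J$.

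\emph{Upward closure.} Suppose $d\in\mathcal D_J(f)$ and $d\mid e\mid K$. Write $e=d\cdot(e/d)$. Complete replicability gives
\[
f^{(e)}=\bigl(f^{(d)}\bigr)^{(e/d)}=J^{(e/d)}=J.
\]
Since $e\mid K$, this shows $e\in\mathcal D_J(f)$.

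\emph{The equivalence \eqref{eq:J-indices}.} For arbitrary $n\geq1$, the finite-order relation gives $f^{(n)}=f^{((n,K))}$, and $(n,K)$ is a divisor of $K$. So $f^{(n)}=J$ holds exactly when $f^{((n,K))}=J$, which by definition means $(n,K)\in\mathcal D_J(f)$. Both directions follow from this one identity, and upward closure is not even needed here.

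\emph{The final assertion.} If $K\in\mathcal D_J(f)$, then $f^{(K)}=J$, so $f$ is $J$-final by definition. Conversely, if $f$ is $J$-final, then $f^{(K)}=J$ by Proposition~\ref{prop:J-terminal-equivalence}, i.e.\ $K\in\mathcal D_J(f)$. Alternatively, if $f^{(n)}=J$, then \eqref{eq:J-indices} puts $(n,K)$ in $\mathcal D_J(f)$, and upward closure applied to $(n,K)\mid K$ puts $K$ in it.

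Everything is routine. The only point to watch is that the finite-order relation is used with $s=n$ arbitrary, not only with divisors of $K$, which is exactly what \eqref{eq:finite-order} allows.
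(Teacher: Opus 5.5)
Your proof is correct and follows the paper's argument essentially verbatim: upward closure from complete replicability together with the fact that every replicate of $J$ is $J$, the equivalence \eqref{eq:J-indices} from the single identity $f^{(n)}=f^{((n,K))}$, and the final assertion from Proposition~\ref{prop:J-terminal-equivalence}. Your alternative route to the last claim, applying upward closure to $(n,K)\mid K$, is also valid; it simply reproves that proposition inside the divisor-set formalism.
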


\begin{proof}
If $d\mid e$ and $f^{(d)}=J$, complete replicability gives
\[
f^{(e)}=\left(f^{(d)}\right)^{(e/d)}=J.
\]
This proves upward closure. The finite-order identity gives
\[
f^{(n)}=f^{((n,K))},
\]
which is exactly \eqref{eq:J-indices}. The last assertion is Proposition~\ref{prop:J-terminal-equivalence} in this notation.
\end{proof}

Congruence invariance yields an explicit divisor in $\mathcal D_J(f)$.

\begin{cor}\label{cor:congruence-J-divisor}
Let $f$ be completely replicable of finite order $K$, holomorphic on $\Hh$, and invariant under $\Gamma_0(N)$. Then
\begin{equation}\label{eq:gcd-J-divisor}
(N,K)\in\mathcal D_J(f).
\end{equation}
In particular, $f^{(K)}=J$.
\end{cor}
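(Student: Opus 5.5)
The plan is to combine the congruence descent of Theorem~\ref{thm:congruence-descent} with the finite-order relation \eqref{eq:finite-order}, and then use Proposition~\ref{prop:J-divisor-filter} to pass from a $J$-final index to the divisor $(N,K)$.

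First, $f$ is replicable, holomorphic on $\Hh$, and $\Gamma_0(N)$-invariant, so Corollary~\ref{cor:J-final-level} applies and gives $f^{(N)}=J$. Next, the finite-order relation \eqref{eq:finite-order} with $s=N$ yields
\[
f^{((N,K))}=f^{(N)}=J .
\]
Since $(N,K)\mid K$, this is exactly the statement that $(N,K)\in\mathcal D_J(f)$, which proves \eqref{eq:gcd-J-divisor}.

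For the final assertion, $(N,K)\mid K$, so the upward closure in Proposition~\ref{prop:J-divisor-filter} gives $K\in\mathcal D_J(f)$, that is, $f^{(K)}=J$. Alternatively, complete replicability gives directly
\[
f^{(K)}=\bigl(f^{((N,K))}\bigr)^{(K/(N,K))}=J^{(K/(N,K))}=J,
\]
because every replicate of $J$ equals $J$.

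There is no real obstacle here. The only point to check is that the hypotheses of Corollary~\ref{cor:J-final-level} are met. Holomorphy on $\Hh$ and $\Gamma_0(N)$-invariance are assumed explicitly. The normalization $q^{-1}+O(q)$ is built into Definition~\ref{def:replicable}, and the $T$-invariance needed inside Theorem~\ref{thm:congruence-descent} comes from the integral $q$-expansions of the replicates. Complete replicability is used only in the last step, which lifts the divisor $(N,K)$ to $K$.
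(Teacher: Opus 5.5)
Your proof is correct and follows the paper's argument exactly. Corollary~\ref{cor:J-final-level} gives $f^{(N)}=J$, the finite-order relation with $s=N$ transfers this to $f^{((N,K))}$, and the upward closure of Proposition~\ref{prop:J-divisor-filter} lifts the divisor $(N,K)$ to $K$; your alternative computation via complete replicability is simply that upward-closure argument written out.
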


\begin{proof}
Corollary~\ref{cor:J-final-level} gives $f^{(N)}=J$. Since $f^{(N)}=f^{((N,K))}$, the divisor $(N,K)$ lies in $\mathcal D_J(f)$. Upward closure then gives $K\in\mathcal D_J(f)$.
\end{proof}

Martin's modularity theorem gives the complementary implication: $J$-final complete replication forces congruence invariance \cite{martin}. Corollary~\ref{cor:congruence-J-divisor} shows that, once congruence invariance is present, ordinary replication already forces $J$-finality, while complete replication identifies the terminal replicate with $J$.

\section{The terminal alternative}

\begin{prop}\label{prop:trig-family}
For $c\in\C$, put
\[
u_c(\tau)=q^{-1}+cq.
\]
Then $u_c$ is completely replicable with
\begin{equation}\label{eq:trig-replicates}
u_c^{(a)}=u_{c^a}.
\end{equation}
\end{prop}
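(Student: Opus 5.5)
The plan is to verify the replication identity directly, by producing the Faber polynomials of $u_c$ explicitly. Write $x=q^{-1}$, so $u_c=x+c x^{-1}$. This is the Joukowski-type substitution, and the natural guess is
\[
\Phi_n(X)=\Bigl(\tfrac{X+\sqrt{X^2-4c}}{2}\Bigr)^n+\Bigl(\tfrac{X-\sqrt{X^2-4c}}{2}\Bigr)^n .
\]
This is the unique monic degree-$n$ polynomial with $\Phi_n(x+c/x)=x^n+c^n x^{-n}$; it is a scaled Chebyshev/Dickson polynomial. Indeed, the two roots of $Y^2-XY+c=0$ at $X=x+c/x$ are $x$ and $c/x$, and the power sum of these roots is a polynomial in $X$ and $c$ that is monic of degree $n$ in $X$. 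Consequently
\[
\Phi_n(u_c(q))=q^{-n}+c^nq^n,
\]
which is $q^{-n}+O(q)$ for $n\ge1$, so this is the Faber polynomial in the normalization \eqref{eq:Faber-normalization}. For $c=0$ the same formula reads $\Phi_n(X)=X^n$, which is consistent.

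Next I evaluate the right side of \eqref{eq:replication} with the proposed replicates $u_{c^a}$. For $ad=n$ and $0\le b<d$, set $\zeta=e^{2\pi i b/d}$. Then
\[
u_{c^a}\!\left(\tfrac{a\tau+b}{d}\right)=\zeta^{-1}q^{-a/d}+c^a\zeta\,q^{a/d}.
\]
Summing over $b$ gives zero unless $d=1$, because $\sum_b\zeta^{\pm1}=0$ for $d>1$. For $d=1$ the sum gives $q^{-n}+c^nq^n$. Hence the right side equals $q^{-n}+c^nq^n=\Phi_n(u_c)$, so $u_c$ is replicable with $u_c^{(a)}=u_{c^a}$. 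By the uniqueness of the replicate family noted after Definition~\ref{def:replicable}, these are the replicates. I will take the convention $c^1=c$, so that $u_c^{(1)}=u_c$.

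For complete replicability, note that each replicate $u_{c^a}$ lies in the same family, so it is replicable by what was just shown, with $(u_{c^a})^{(b)}=u_{(c^a)^b}=u_{c^{ab}}=u_c^{(ab)}$. This is \eqref{eq:complete-composition}. Convergence is trivial, since these are Laurent polynomials in $q$, holomorphic on $\Hh$.

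I do not expect any real obstacle. The only point needing care is to justify the Faber polynomial identity without square roots, which I would do via the recursion $P_{n+1}=XP_n-cP_{n-1}$ with $P_0=2$ and $P_1=X$. This recursion is satisfied by $x^n+(c/x)^n$, and it shows by induction that $\Phi_n$ is a monic polynomial; the special case $n=1$ gives $\Phi_1=X$.
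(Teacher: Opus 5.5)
Your proposal is correct and follows the paper's own proof. Both arguments identify the Faber polynomials through the recursion $P_n=XP_{n-1}-cP_{n-2}$ with $P_0=2$ and $P_1=X$, so that $P_n(u_c)=q^{-n}+c^nq^n$. Both then kill every $d>1$ branch by summing roots of unity, and both get completeness from the fact that $u_c^{(a)}=u_{c^a}$ stays in the same family. Your Dickson/square-root formula is only an equivalent closed form of the same polynomials.
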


\begin{proof}
Set $x=q^{-1}+cq$. Define polynomials recursively by
\[
P_0(X)=2,
\qquad
P_1(X)=X,
\qquad
P_n(X)=XP_{n-1}(X)-cP_{n-2}(X).
\]
An induction gives
\[
P_n(q^{-1}+cq)=q^{-n}+c^nq^n.
\]
Hence $P_n$ is the $n$th Faber polynomial of $u_c$.

For a factorization $ad=n$ and $\zeta_d=e^{2\pi i/d}$,
\[
u_{c^a}\!\left(\frac{a\tau+b}{d}\right)
=
\zeta_d^{-b}q^{-a/d}+c^a\zeta_d^bq^{a/d}.
\]
If $d>1$, summing over $b\bmod d$ annihilates both terms. If $d=1$, then $a=n$ and the contribution is
\[
q^{-n}+c^nq^n=P_n(u_c(\tau)).
\]
The replication identity therefore holds with the family \eqref{eq:trig-replicates}. Also,
\[
\left(u_c^{(a)}\right)^{(b)}
=u_{(c^a)^b}
=u_{c^{ab}}
=u_c^{(ab)},
\]
so the replication is complete.
\end{proof}

The functions
\[
u_0=q^{-1},
\qquad
u_1=q^{-1}+q
\]
have replication order one. The function $u_{-1}=q^{-1}-q$ does not, since
\[
u_{-1}^{(2)}=u_1.
\]

\begin{prop}\label{prop:order-one-classification}
Let $h$ be a completely replicable function of replication order one. Then
\[
h\in\{J,u_0,u_1\}.
\]
\end{prop}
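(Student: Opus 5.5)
The proof will reduce to Kozlov's classification of order-one completely replicable functions and the classification recorded by Gannon, as the introduction indicates. First I unpack the hypothesis. Order one means \eqref{eq:finite-order} with $K=1$, so $h^{(s)}=h^{((s,1))}=h$ for every $s\geq1$. Thus $h$ is its own replicate of every index, and the replication identity \eqref{eq:replication} becomes the self-replication (modular-equation) family
\[
\Phi_n(h(\tau))=\sum_{ad=n}\sum_{0\leq b<d}h\!\left(\frac{a\tau+b}{d}\right),\qquad n\geq1.
\]
These are exactly the Hecke-type identities satisfied by $J$, and this is Kozlov's setting. Kozlov's theorem for order one says that such an $h$ satisfies the modular equations of $J$. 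The list recorded by Gannon then says the only normalized solutions, over arbitrary complex coefficients, are $J$ and the trigonometric functions $q^{-1}+cq$ with $c$ restricted to a small set. I will invoke these results as stated in \cite{kozlov,gannon-hauptmodul} rather than reprove them, and cite the precise form: $h\in\{J,\ q^{-1},\ q^{-1}+q,\ q^{-1}-q\}$.

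Second, I remove the spurious entry $q^{-1}-q$ using Proposition~\ref{prop:trig-family}. By \eqref{eq:trig-replicates}, every $u_c$ is completely replicable with $u_c^{(a)}=u_{c^a}$. Because replicates are unique (the induction on $n$ noted after Definition~\ref{def:replicable}), the order-one condition $u_c^{(2)}=u_c$ forces $u_{c^2}=u_c$, that is, $c^2=c$, so $c\in\{0,1\}$. For $c=-1$ this is exactly the computation $u_{-1}^{(2)}=u_1\neq u_{-1}$ already noted after that proposition. The same argument shows that any trigonometric candidate $q^{-1}+cq$ in the cited list must have $c\in\{0,1\}$. This is how the trichotomy avoids integrality hypotheses. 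Finally I check that $J$, $u_0$ and $u_1$ really have order one: for $J$ this was stated earlier, and for $u_0,u_1$ it follows from $c^a=c$.

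The main obstacle is the first step: making sure the cited classification applies with no coefficient assumptions. In particular, it must exclude non-modular power series, and it must produce only $q^{-1}+cq$ as the non-$J$ alternatives. If the cited statement only covers integral or rational coefficients, my fallback is to derive the constraints directly. Comparing coefficients in the $n=2$ self-replication identity expresses $a_2$, $a_4$ and further coefficients in terms of $a_1$ and $a_3$. Higher $n$ then give Norton-type recursions, which show that $h$ is determined by $a_1,a_2,a_3,a_5$ and must satisfy polynomial relations. Solving these relations should give either the coefficient data of $J$ or $a_r=0$ for $r\geq2$ together with $a_1=a_1^2$. This is the computation I expect to be the hardest, and I would lean on the literature for it.
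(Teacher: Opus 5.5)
Your proposal is correct and follows the paper's proof. You cite Kozlov's Proposition~3.3 and the classification recorded by Gannon to get $h\in\{J,q^{-1},q^{-1}+q,q^{-1}-q\}$, then discard $q^{-1}-q$ because $u_{-1}^{(2)}=u_1\neq u_{-1}$ by Proposition~\ref{prop:trig-family}. One wording fix: Kozlov's result is that $h$ satisfies modular equations of every order $n\geq2$ (polynomial relations between $h(\tau)$ and $h(n\tau)$), not literally ``the modular equations of $J$'', which $q^{-1}$ and $q^{-1}+q$ do not satisfy.
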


\begin{proof}
Proposition~3.3 of Kozlov shows that an order-one completely replicable function satisfies modular equations of every order $n\geq2$ \cite{kozlov}. The classification recorded in \cite[Theorem~2]{gannon-hauptmodul} gives
\[
J,\qquad q^{-1},\qquad q^{-1}+q,\qquad q^{-1}-q
\]
as the only possibilities. The last function has replication order greater than one, since Proposition~\ref{prop:trig-family} gives
\[
u_{-1}^{(2)}=u_1\neq u_{-1}.
\]
\end{proof}

\begin{thm}\label{thm:terminal-trichotomy}
Let $f$ be a completely replicable function of finite order $K$. Then
\begin{equation}\label{eq:terminal-three}
f^{(K)}\in\{J,u_0,u_1\}
=\{J,q^{-1},q^{-1}+q\}.
\end{equation}
\end{thm}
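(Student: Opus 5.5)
The plan is to reduce Theorem~\ref{thm:terminal-trichotomy} to Proposition~\ref{prop:order-one-classification}. It suffices to show that the terminal replicate $h:=f^{(K)}$ is itself a completely replicable function of replication order one. Three properties of $h$ have to be established: it is a replicable function in the analytic sense, its replicates are completely replicable, and it satisfies the order-one relation $h^{(s)}=h^{((s,1))}=h$ for every $s\geq1$.

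The first two properties follow from earlier results. By Definition~\ref{def:complete}, $h=f^{(K)}$ is replicable. Its replicates are $h^{(a)}=f^{(Ka)}$, and each of these is replicable by the same definition. For the composition law,
\[
\bigl(h^{(a)}\bigr)^{(b)}=\bigl(f^{(Ka)}\bigr)^{(b)}=f^{(Kab)}=h^{(ab)}.
\]
Holomorphy of $h$ on $\Hh$ comes from Proposition~\ref{prop:analytic-replicates}, since $h$ is a replicate of the replicable function $f$. So $h$ is a completely replicable function.

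For the order-one relation, I would not go through Kozlov's general statement that $f^{(s)}$ has order $K/(s,K)$. Instead I would compute directly from \eqref{eq:finite-order} and \eqref{eq:complete-composition}:
\[
h^{(s)}=f^{(Ks)}=f^{((Ks,K))}=f^{(K)}=h,
\]
using $(Ks,K)=K$. This is the special case $m=K$ of Proposition~\ref{prop:common-terminal}. Proposition~\ref{prop:order-one-classification} then gives $h\in\{J,u_0,u_1\}$, which is exactly the statement.

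The only delicate point is the meaning of ``completely replicable of order one'' in Kozlov's Proposition~3.3, as invoked in Proposition~\ref{prop:order-one-classification}. Kozlov's hypothesis must be exactly the condition verified here, namely $h^{(s)}=h$ for all $s$ together with complete replicability. Since the paper defines finite order intrinsically through \eqref{eq:finite-order} with $K=1$, this should hold. I would state explicitly that the hypotheses of Proposition~\ref{prop:order-one-classification} are met by $h$ in precisely the form used there. No coefficient hypotheses are needed, because the exclusion of $u_{-1}$ in that proposition was elementary.
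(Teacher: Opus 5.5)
Your proof is correct and follows the paper's route: you show that $h=f^{(K)}$ is completely replicable of order one and then apply Proposition~\ref{prop:order-one-classification}. The only difference is that you check $h^{(s)}=f^{(Ks)}=f^{((Ks,K))}=h$ directly, where the paper cites Kozlov's order formula $K/(s,K)$ at $s=K$, so your version of this step is self-contained. (Minor slip: the relevant instance of Proposition~\ref{prop:common-terminal} is $m=Ks$, not $m=K$; taking $m=K$ only gives the trivial identity $(f^{(K)})^{(1)}=f^{(K)}$.)
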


\begin{proof}
Kozlov's order formula gives the replication order
\[
\frac{K}{(K,K)}=1
\]
for $f^{(K)}$ \cite{kozlov}. The result follows from Proposition~\ref{prop:order-one-classification}.
\end{proof}

The trichotomy requires no integrality or algebraicity assumption on the Fourier coefficients. It sharpens when the original function has symmetry beyond translations.

For a holomorphic function $f$ on $\Hh$, write
\[
G(f)=
\left\{
\gamma\in\mathrm{SL}_2(\mathbb R):
f(\gamma\tau)=f(\tau)\text{ for all }\tau\in\Hh
\right\}.
\]
The translation subgroup is
\[
G_{\mathrm{tr}}=\{\pm T^m:m\in\Z\}.
\]

\begin{cor}\label{cor:nontranslation-J-final}
Let $f$ be completely replicable of finite order $K$. If
\[
G(f)\neq G_{\mathrm{tr}},
\]
then
\[
f^{(K)}=J.
\]
In particular, $f$ is $J$-final.
\end{cor}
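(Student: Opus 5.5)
The plan is to feed the nontranslation hypothesis into the theorem of Cummins and Gannon, as announced in the introduction. For a completely replicable function whose symmetry group is not just translations, that theorem (a modular-equation/nontranslation result, \cite{cummins-gannon}) says the function is a Hauptmodul for a genus-zero group commensurable with $\mathrm{SL}_2(\Z)$. In particular $G(f)$ then contains some $\Gamma_0(N)$. So the first step is to invoke this result and extract an integer $N$ with $\Gamma_0(N)\subseteq G(f)$. In the same step we must record that $f$ is holomorphic on $\Hh$, which holds automatically since $f$ is a replicable function in the sense of Definition~\ref{def:replicable}.

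With $\Gamma_0(N)$-invariance and holomorphy in hand, apply Corollary~\ref{cor:congruence-J-divisor}. It gives $(N,K)\in\mathcal D_J(f)$, and then $K\in\mathcal D_J(f)$ by upward closure, i.e.\ $f^{(K)}=J$. Unwound, the argument is Theorem~\ref{thm:congruence-descent} and Corollary~\ref{cor:J-final-level}, which give $f^{(N)}=J$. Finite order then gives $f^{(N)}=f^{((N,K))}$, and complete replicability pushes this up to $f^{(K)}$ as in Proposition~\ref{prop:J-terminal-equivalence}. $J$-finality is then immediate from Definition~\ref{def:J-final}.

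As a consistency check against Theorem~\ref{thm:terminal-trichotomy}, the alternatives $u_0$ and $u_1$ must be ruled out, and the argument above does this automatically. Note that $G(u_0)$ is exactly the translation group, since $q^{-1}$ is injective modulo $\tau\mapsto\tau+1$. Also $u_1=2\cos(2\pi\tau)$ is fixed by no nontrivial element beyond translations and $\pm I$ acting holomorphically on all of $\Hh$.

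The main obstacle is the first step: checking that the Cummins--Gannon theorem applies under exactly our hypotheses and yields specifically a $\Gamma_0(N)$ inside $G(f)$, not merely commensurability or a conjugate of a congruence group. If their conclusion only gives a group containing some $\Gamma(N)$, I would first conjugate by $\left(\begin{smallmatrix}N&0\\0&1\end{smallmatrix}\right)$ and use $\Gamma(N)\supseteq$ a conjugate of $\Gamma_0(N^2)$. This needs care, because width one at infinity must be preserved for the $q$-expansion normalization. Alternatively, I would use the fact that $T\in G(f)$ together with $\Gamma(N)\subseteq G(f)$ forces $\Gamma_1(N)\subseteq G(f)$. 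Since $\Gamma_1(N)$ and $T$ sit inside $\Gamma_0(N)$, I would then rerun the descent argument of Theorem~\ref{thm:congruence-descent} with $\Gamma_1(N)$ in place of $\Gamma_0(N)$ if necessary.
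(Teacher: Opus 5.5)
Your proposal is essentially the paper's proof: you obtain $\Gamma_0(N)\subseteq G(f)$ from the nontranslation case of Cummins--Gannon and conclude with Corollary~\ref{cor:congruence-J-divisor}. The step you flagged is closed by first citing Kozlov's theorem, which gives modular equations of every order prime to $K$, in particular for all $n\equiv 1\pmod K$; this is where finite order enters your first step, and it is the actual hypothesis of Cummins--Gannon's Theorem~1.3, whose nontranslation conclusion (needing no arithmetic hypothesis on the coefficients) already gives $\Gamma_0(N)\subseteq G(f)$ with finite index for some $N\mid K^\infty$, so neither the inference from mere commensurability (which would not suffice) nor your $\Gamma(N)$ fallbacks are needed.
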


\begin{proof}
For a finite-order completely replicable function of order $K$, Kozlov's modular-equation result gives modular equations of every order coprime to $K$ \cite{kozlov}; see also \cite[Section~8]{cummins-gannon}. Hence $f$ satisfies modular equations for every $n\equiv1\pmod K$. Since $G(f)$ is larger than the translation group, the nontranslation case of the theorem of Cummins and Gannon applies without an arithmetic hypothesis on the coefficients. It follows that $f$ is a Hauptmodul for $G(f)$ and that $G(f)$ contains $\Gamma_0(N)$ with finite index for some $N\mid K^\infty$ \cite[Theorem~1.3]{cummins-gannon}. Corollary~\ref{cor:congruence-J-divisor} gives
\[
f^{(K)}=J.
\]
\end{proof}

\begin{cor}\label{cor:terminal-translation}
If the terminal replicate of $f$ is $q^{-1}$ or $q^{-1}+q$, then
\[
G(f)=G_{\mathrm{tr}}.
\]
\end{cor}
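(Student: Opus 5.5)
The plan is to argue by contraposition from Corollary~\ref{cor:nontranslation-J-final}. Let $f$ be completely replicable of finite order $K$, and suppose its terminal replicate $f^{(K)}$ is $u_0=q^{-1}$ or $u_1=q^{-1}+q$. We must show $G(f)=G_{\mathrm{tr}}$.

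The first step is the inclusion $G_{\mathrm{tr}}\subseteq G(f)$. The $q$-expansion of $f$ has integral exponents, so $f(\tau+1)=f(\tau)$. Since $-I$ acts trivially on $\Hh$, every $\pm T^m$ fixes $f$.

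For the reverse inclusion, suppose $G(f)\neq G_{\mathrm{tr}}$. Then Corollary~\ref{cor:nontranslation-J-final} gives $f^{(K)}=J$. This contradicts $f^{(K)}\in\{u_0,u_1\}$ once we know $J\neq u_0$ and $J\neq u_1$. Comparing $q$-coefficients settles this: $J$ has $q$-coefficient $196884$, whereas $u_0$ and $u_1$ have $q$-coefficients $0$ and $1$. Alternatively, $u_0$ and $u_1$ are not $\mathrm{SL}_2(\Z)$-invariant. Hence $G(f)=G_{\mathrm{tr}}$.

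Since $f^{(K)}$ is intrinsic (Remark~\ref{rem:terminal-independent}), the hypothesis does not depend on the choice of $K$. There is no real obstacle here: all the substance lies in Corollary~\ref{cor:nontranslation-J-final}, that is, in the Cummins--Gannon theorem together with the congruence descent. The only points to check are that $G(f)$ is computed in $\mathrm{SL}_2(\mathbb R)$, so that $\pm$ signs are harmlessly included, and that the three terminal functions are pairwise distinct.
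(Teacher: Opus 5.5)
Your proposal is correct and is exactly the paper's argument: the corollary is the contrapositive of Corollary~\ref{cor:nontranslation-J-final}. Your extra checks make explicit what the paper leaves implicit: that $G_{\mathrm{tr}}\subseteq G(f)$ always holds, so $G(f)\neq G_{\mathrm{tr}}$ means strictly larger, and that $J\neq u_0,u_1$ by comparing $q$-coefficients.
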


\begin{proof}
This is the contrapositive of Corollary~\ref{cor:nontranslation-J-final}.
\end{proof}

\begin{cor}\label{cor:terminal-characterization}
Let $f$ be completely replicable of finite order $K$. Then $f$ is $J$-final if and only if
\[
f^{(K)}=J.
\]
The two remaining terminal possibilities, $q^{-1}$ and $q^{-1}+q$, are not $J$-final.
\end{cor}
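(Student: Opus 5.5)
The first assertion is exactly Proposition~\ref{prop:J-terminal-equivalence}, so it can simply be quoted. The substance is the second assertion: if $f^{(K)}\in\{q^{-1},q^{-1}+q\}$, then $f$ is not $J$-final. By Proposition~\ref{prop:J-terminal-equivalence}, $J$-finality of $f$ would force $f^{(K)}=J$. So it suffices to check that $J\neq u_0$ and $J\neq u_1$ as $q$-series. This is immediate from the coefficient of $q$, which is $196884$ for $J$, $0$ for $u_0$ and $1$ for $u_1$. By uniqueness of the replicate family, $f^{(K)}$ is a single well-defined series, so it cannot equal both $J$ and one of the $u_c$.

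The statement could also be read as saying that the functions $q^{-1}$ and $q^{-1}+q$ themselves are not $J$-final. I would record this as well. By Proposition~\ref{prop:trig-family}, $u_c^{(n)}=u_{c^n}$, so for $c\in\{0,1\}$ every replicate of $u_c$ is $u_c$ itself. Since $u_c\neq J$, neither function is $J$-final, which is consistent with their having replication order one.

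There is also a more conceptual route to the same conclusion. By Proposition~\ref{prop:J-divisor-filter}, the $J$-final indices of $f$ are exactly those $n$ with $(n,K)\in\mathcal D_J(f)$. If $f^{(K)}\neq J$, then $K\notin\mathcal D_J(f)$, and by upward closure $\mathcal D_J(f)$ is empty. Hence no replicate of $f$ equals $J$. I expect no real obstacle here. The only point needing care is that "terminal replicate" is well defined (Remark~\ref{rem:terminal-independent}), so that the trichotomy of Theorem~\ref{thm:terminal-trichotomy} splits finite-order completely replicable functions into disjoint classes, with the $J$-final ones being exactly the class whose terminal replicate is $J$.
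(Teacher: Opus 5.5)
Your proposal is correct and follows the paper's proof: you quote Proposition~\ref{prop:J-terminal-equivalence} for the first assertion, and for the second you note that $J$-finality would force $f^{(K)}=J$, which contradicts a terminal replicate equal to $u_0$ or $u_1$. Your explicit check that $J\neq u_0,u_1$ via the coefficient of $q$ only spells out what the paper leaves implicit in the word ``contradiction.'' The same holds for your remarks on the literal reading and on $\mathcal D_J(f)$.
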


\begin{proof}
The first assertion is Proposition~\ref{prop:J-terminal-equivalence}. If the terminal replicate is $u_0$ or $u_1$ and some replicate were $J$, Proposition~\ref{prop:J-terminal-equivalence} would force the terminal replicate to be $J$, a contradiction.
\end{proof}

\begin{example}\label{ex:minus}
The function
\[
u_{-1}(\tau)=q^{-1}-q
\]
is completely replicable and
\[
u_{-1}^{(2)}=q^{-1}+q.
\]
Thus $u_{-1}$ has a finite-order replication tower with trigonometric terminal replicate.
\end{example}

\begin{remark}\label{rem:replication-roots}
Theorem~\ref{thm:terminal-trichotomy} concerns only the terminal object. It does not classify the functions above $u_0$ or $u_1$ in a replication tower. The relation $u_{-1}^{(2)}=u_1$ gives a nontrivial replication root of a terminal trigonometric function. The computational classification of Alexander, Cummins, McKay, and Simons assumes uniqueness of the replication roots of $u_0$ and $u_1$ \cite{alexander-cummins-mckay-simons}; no such assumption is needed for Theorem~\ref{thm:terminal-trichotomy}.
\end{remark}

\section{Relation with earlier modularity results}

Theorem~\ref{thm:congruence-descent} complements earlier implications in the subject. Cummins and Norton start from genus-zero modular functions and obtain replication \cite{cummins-norton}, while Martin derives modular invariance from sufficiently strong complete replication together with $J$-finality \cite{martin}. Results of Kozlov and of Cummins and Gannon use modular equations to obtain modularity, genus-zero, or trigonometric alternatives \cite{kozlov,cummins-gannon}.

The present argument begins from congruence invariance. Carnahan records the same level formula for integral completely replicable functions using the classified list and Ferenbaugh's fixing groups \cite[Corollary~5.6]{carnahan}, while Norton's earlier work treats the coprime-index case in the Hauptmodul setting \cite{norton-nonmonstrous,cummins-norton}. Theorem~\ref{thm:congruence-descent} gives the formula for arbitrary indices directly from the replication identity, without complete replicability or arithmetic assumptions on the coefficients. The terminal trichotomy enters only after finite complete replication is imposed, when Kozlov's order-one modular-equation result and the classification recorded by Gannon give the three possibilities used above \cite{kozlov,gannon-hauptmodul}.

\end{document}